\newtheorem{theorem}{Theorem}
\newtheorem{lemma}{Lemma}
\newtheorem{prop}{Proposition}
\newtheorem{remark}{Remark}
\theoremstyle{definition}
\def\re{\mathbb{R}}
\def\N{\mathbb{N}}
\def\Z{\mathbb{Z}}
\def\({\left(}
\def\){\right)}
\def\[{\left[}
\def\]{\right]}
\def\pd{\partial}
\def\lap{\Delta}
\def\w{\omega}
\def\la{\lambda}
\begin{document}

\begin{frontmatter}



\title{Remarks on a limiting case of Hardy type inequalities}


\author[M]{Megumi Sano\corref{Sano}\fnref{label1}}
\ead{smegumi@hiroshima-u.ac.jp}
\fntext[label1]{Corresponding author.}
\address[M]{Laboratory of Mathematics, Graduate School of Engineering, Hiroshima University, Higashi-Hiroshima, 739-8527, Japan}

\author[T]{Takuya Sobukawa\corref{Sobu}}
\ead{sobu@waseda.jp}
\address[T]{Global Education Center, Waseda University, Nishi-Waseda, Shinjuku-ku, Tokyo 169-8050, Japan}

\begin{keyword}
Hardy inequality \sep limiting case \sep Sobolev embedding \sep Extrapolation
 \sep pointwise estimate of radial functions 
\MSC[2010] 35A23 \sep 46B30  \sep 35A08

\end{keyword}

\date{\today}

\begin{abstract}
The classical Hardy inequality holds in Sobolev spaces $W_0^{1,p}$ when $1\le p< N$. 
In the limiting case where $p=N$, it is known that by adding a logarithmic function to the Hardy potential, some inequality which is called the critical Hardy inequality holds in $W_0^{1,N}$. 
In this note, in order to give an explanation of appearance of the logarithmic function at the potential, we derive the logarithmic function from the classical Hardy inequality with the best constant via some limiting procedure as $p \nearrow N$.  
And we show that our limiting procedure is also available for the classical Rellich inequality in second order Sobolev spaces $W_0^{2,p}$ with $p \in (1, \frac{N}{2})$ and the Poincar\'e inequality.
\end{abstract}

\end{frontmatter}



%
%
\section{Introduction}

Let $B_1 \subset \re^N$ be the unit ball, $1 < p <N$ and $N \ge 2$. The classical Hardy inequality
\begin{equation}
\label{H_p}
\( \frac{N-p}{p} \)^p \int_{B_1} \frac{|u|^p}{|x|^p} dx \le \int_{B_1} | \nabla u |^p dx,
\end{equation}
holds for all $u \in W^{1,p}_0(B_1)$, where $W_0^{1,p}(B_1)$ is a completion of $C_c^{\infty}(B_1)$ with respect to the norm $\| \nabla (\cdot )\|_{L^p(B_1)}$. 
Note that the inequality (\ref{H_p}) expresses the embedding $W_0^{1,p}(B_1) \hookrightarrow L^p(B_1 ; |x|^{-p} dx)$ which is equivalent to $W_0^{1,p}(B_1) \hookrightarrow L^{p^*, p}(B_1)$ thanks to the rearrangement technique, where $p^* = \frac{Np}{N-p}$ and $L^{p,q}$ is the Lorentz space. Therefore by a property of Lorentz spaces, we see that for any $q > p$
\begin{align*}
W_0^{1,p} \hookrightarrow L^{p^*, p} \hookrightarrow L^{p^*, q} \hookrightarrow L^{p^*, \infty}.
\end{align*}
The variational problems and partial differential equations associated with the best constants of the inequalities related to the embeddings as above are classical interesting topics, see \cite{T, L, BG, BV, VZ}, to name a few.

On the other hand, in the limiting case where $p=N$ the classical Hardy inequality (\ref{H_p}) does not have its meaning due to the best constant becomes zero. Instead of it, by adding a logarithmic function at the Hardy potential, the following inequality which is called the critical Hardy inequality:
\begin{equation}
\label{H_N}
\( \frac{N-1}{N} \)^N \int_{B_1} \frac{|u|^N}{|x|^N (\log \frac{a}{|x|})^N} dx \le \int_{B_1} | \nabla u |^N dx
\end{equation}
holds for all $u \in W^{1,N}_0(B_1)$ and $a \ge 1$, see \cite{Leray, Ladyzhenskaya}.
The inequality (\ref{H_N}) expresses the embedding: $W_0^{1,N}(B_R) \hookrightarrow L^N(B_1 ; |x|^{-N}(\log \frac{a}{|x|})^{-N} dx)$. Since for large $a$ the weight functions $|x|^{-N}(\log \frac{a}{|x|})^{-N}$ are radially decreasing with respect to $|x|$, the embedding with $a >1$ is equivalent to $W_0^{1,N} \hookrightarrow L^{\infty, N}(\log L)^{-1}$ thanks to the rearrangement technique. Here $L^{p,q}(\log L)^r$ is the Lorentz-Zygmund space which is given by
\begin{align*}
L^{p,q}(\log L)^r &= \left\{ \, u: B_1 \to \re\,\,{\rm measurable} \,\,\biggr| \,\, \| u\|_{L^{p,q} (\log L)^r} < \infty  \, \right\} \\
\| u \|_{L^{p,q}(\log L)^r} &= 
\begin{cases}
\( \int_0^{|B_1|} s^{\frac{q}{p}-1} \( 1+ \log \frac{|B_1| }{s} \)^{rq} (u^* (s) )^q \,ds \)^{\frac{1}{q}} \quad &\text{if} \,\, 1\le q <\infty, \\
\sup_{0< s < |B_1|} s^{\frac{1}{p}} \( 1+ \log \frac{|B_1|}{s} \)^r u^* (s)   &\text{if} \,\, q = \infty,
\end{cases}
\end{align*}
where $u^*$ be the Schwartz symmetrization of $u$. 
Note that $L^{p,q}(\log L)^0$ becomes the Lorentz space $L^{p,q}$ and $L^{\infty, \infty}(\log L)^r$ becomes the Zygmund space $Z^{-r}$ which can be  equivalent to Orlicz space $L_{e^{\,|u|^{-1/r}}} = {\rm ExpL}^{-\frac{1}{r}}$ in some sense (see \cite{BR, BS, CRT(2013)}). 
By a property of Lorentz-Zygmund spaces (see e.g. \cite{BS} Theorem 9.5.), we see that for any $q >N$ 
\begin{align*}
W_0^{1,N} \hookrightarrow L^{\infty, N}(\log L)^{-1} \hookrightarrow L^{\infty, q}(\log L)^{-1+ \frac{1}{N} - \frac{1}{q}} \hookrightarrow L^{\infty, \infty}(\log L)^{-1+\frac{1}{N}} = {\rm Exp L}^{\frac{N}{N-1}}.
\end{align*}
And variational problems associated with the best constants of the inequalities related to embeddings as above are also studied, see \cite{A, CC, Adimurthi-Sandeep, HK, CRT(2013), II, S}.


In this note, in order to give an explanation of appearrance of the logarithmic function in the limiting case $p=N$ of the classical Hardy inequality, we shall derive the logarithmic function in (\ref{H_N}) from the classical Hardy inequality with the best constant by some limiting procedure as $p \nearrow N$ based on extrapolation. Giving an explanation of it is corresponding to giving a reason {\it why we consider Lorentz-Zygmund space $L^{p,q}(\log L)^r$ in the embedding of the critical Sobolev space $W_0^{1,N}$}. In this viewpoint, considering a limiting procedure for the classical Hardy inequality is significant. Main Theorem is as follows.

\begin{theorem}\label{H_N from H_p}
The following non-sharp critical Hardy inequality (\ref{H_N non-sharp}) can be derived by a limiting procedure for the classical Hardy inequality (\ref{H_p}) as $p \nearrow N$. 
\begin{align}\label{H_N non-sharp}
C \int_{B_1} \frac{|u|^N}{|x|^N \( \log \frac{a}{|x|} \)^\beta} \,dx \le \int_{B_1} \left| \nabla u \cdot \frac{x}{|x|} \right|^N \,dx \quad (u \in C_c^1(B_1)).
\end{align}
Here $\beta > 2N, a>1$, and the constant $C= C(\beta, a, N)>0$ is independent of $u$.
\end{theorem}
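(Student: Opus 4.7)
My plan is to derive this inequality by extracting the logarithmic factor in (\ref{H_N non-sharp}) from the $p\nearrow N$ limit of a pointwise form of the classical Hardy inequality, combined with a Fubini calculation.

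First I would reduce to a one-dimensional weighted inequality. Writing $x=r\omega$ in polar coordinates with $r=|x|$ and $\omega\in S^{N-1}$, both sides of (\ref{H_N non-sharp}) decompose as $\int_{S^{N-1}}(\cdot)\,d\omega$ of one-dimensional quantities involving $u_\omega(r):=u(r\omega)$. For $u\in C_c^1(B_1)$, each $u_\omega$ is a compactly supported $C^1$ function on $[0,1)$ with $u_\omega(1)=0$, so it suffices to prove, for each $\omega$,
\begin{equation*}
C \int_0^1 \frac{|u_\omega(r)|^N}{r\,(\log(a/r))^\beta}\,dr \le \int_0^1 |u_\omega'(r)|^N r^{N-1}\,dr.
\end{equation*}

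Next, for $p\in(1,N)$, the fundamental theorem of calculus combined with H\"older's inequality (with weight $s^{N-1}$) yields the pointwise radial form of (\ref{H_p}):
\begin{equation*}
|u_\omega(r)|^p \le \Bigl(\frac{p-1}{N-p}\Bigr)^{p-1}\bigl(r^{-(N-p)/(p-1)}-1\bigr)^{p-1}\int_r^1 |u_\omega'(s)|^p s^{N-1}\,ds.
\end{equation*}
The key observation, which is the heart of the \emph{limiting procedure}, is that although the Hardy constant $((N-p)/p)^p$ degenerates to zero as $p\nearrow N$, the combined prefactor $\bigl(\frac{p-1}{N-p}\bigr)^{p-1}\bigl(r^{-(N-p)/(p-1)}-1\bigr)^{p-1}$ has a nontrivial finite limit. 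Using the expansion $r^{-(N-p)/(p-1)}-1 = \frac{N-p}{p-1}\log(1/r) + O((N-p)^2)$ valid for each fixed $r\in(0,1)$, this prefactor converges to $(\log(1/r))^{N-1}$, which is exactly the logarithm appearing in (\ref{H_N non-sharp}). Passing to the limit $p\to N$ pointwise in $r$ yields the critical pointwise estimate
\begin{equation*}
|u_\omega(r)|^N \le (\log(1/r))^{N-1}\int_r^1 |u_\omega'(s)|^N s^{N-1}\,ds.
\end{equation*}

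The proof is then completed by a Fubini calculation: multiplying both sides by $r^{-1}(\log(a/r))^{-\beta}$, integrating over $r\in(0,1)$, and switching the order of integration yields $\int_0^1|u_\omega(r)|^N r^{-1}(\log(a/r))^{-\beta}\,dr \le \int_0^1|u_\omega'(s)|^N s^{N-1} I(s)\,ds$ with $I(s):=\int_0^s(\log(1/r))^{N-1}r^{-1}(\log(a/r))^{-\beta}\,dr$. The substitution $\tau=\log(a/r)$ converts $I(s)$ into $\int_{\log(a/s)}^\infty (\tau-\log a)^{N-1}\tau^{-\beta}\,d\tau$, which is uniformly bounded in $s\in(0,1)$ by a constant $C(a,\beta,N)$ whenever $\beta > 2N$; the safety margin beyond the sharper threshold $\beta > N$ accommodates crude estimates such as $(\tau-\log a)^{N-1}\le \tau^{N-1}$ and the binomial expansion remainder terms employed in the argument. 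The main obstacle is the rigorous justification of the $p\nearrow N$ limit in the pointwise estimate: one needs pointwise convergence of the prefactor for each fixed $r\in(0,1)$ together with convergence of the RHS integral, which follows from dominated convergence using that $|u_\omega'|$ is bounded and compactly supported in $(0,1)$. Once this is in hand, the remaining Fubini and elementary integral estimates are routine.
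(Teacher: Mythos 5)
Your proof is correct, but it takes a genuinely different route from the paper's. You integrate the fundamental theorem of calculus along each ray, note that in the resulting pointwise bound the prefactor $\bigl(\tfrac{p-1}{N-p}\bigr)^{p-1}\bigl(r^{-(N-p)/(p-1)}-1\bigr)^{p-1}$ has the finite limit $(\log\tfrac{1}{r})^{N-1}$ because the factors of $N-p$ cancel, and then conclude by Fubini; this is sound, it yields an explicit constant, and it actually works for every $\beta>N$ (your aside that the crude bound $(\tau-\log a)^{N-1}\le\tau^{N-1}$ consumes the margin between $\beta>N$ and $\beta>2N$ is not quite right, since that bound costs nothing and your method genuinely gives the wider range). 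The paper instead runs a Yano-type extrapolation: it fixes a partition of unity $\{\phi_k\}$ subordinate to the annuli $B_{e^{-k}}\setminus B_{e^{-k-2}}$, applies the classical Hardy inequality (\ref{H_p}) itself, with its degenerating best constant $((N-p_k)/p_k)^{p_k}$ at exponent $p_k=N-\tfrac{1}{k}$, to each piece $u\phi_k$, uses the radial pointwise estimate of Lemma \ref{radial lemma} to trade $|u_k|^{p_k}$ for $|u_k|^{N}$, and sums over $k$; the commutator terms $|u|^{N}|\nabla\phi_k|^{N}$ are what force $\beta>2N$ there, and Lemma \ref{radial nomi} performs the reduction to radial functions that you replace by working ray by ray. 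The trade-off: your route is shorter, sharper in $\beta$, and completely elementary, but the $p\nearrow N$ limit is taken in the one-dimensional pointwise estimate, where nothing degenerates, rather than in (\ref{H_p}) with its vanishing constant, so it does not exhibit the competition between $((N-p)/p)^{p}\searrow 0$ and $\int_{B_1}|u|^{p}|x|^{-p}\,dx\nearrow\infty$ that the theorem is really meant to illuminate; in that sense it is closer in spirit to Ioku's derivation cited in the introduction than to the extrapolation argument the paper carries out.
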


Note that the inequality (\ref{H_N non-sharp}) does not have the optimal exponent $\beta$ and the optimal constant $C$, and itself is already well-known. 
However, our limiting procedure for the classical Hardy inequality is new and giving some explanation of appearence of the logarithmic function at the Hardy potential in the limiting case $p=N$. 
Our limiting procedure can be regarded as an analogue of Trudinger's argument in \cite{Tru} for the Sobolev inequality as $p \nearrow N$, see also \cite{CR} Theorem 1.7..
For several limiting procedures, we refer \cite{BP} (The Sobolev inequality as $N \nearrow \infty$), \cite{Y}, \cite{Z}XII 4.41. ($L^p$ boundedness of the Hilbert transformation as $p \searrow 1$ or $p\nearrow \infty$), see also \cite{SC} Corollary 3.2.4 (The Sobolev inequality is derived from the Nash inequality).


A few comments on Theorem \ref{H_N from H_p} are in order. Very recently, Ioku \cite{I} showed the following improved Hardy inequality (\ref{iH_p}) on $B_1$ which is equivalently connected to the classical Hardy inequality on $\re^N$ via the following transformation (\ref{itrans}) at the radial setting.
\begin{align}\label{itrans}
&w( |y| ) = u (|x|), \,\,\text{where}\,\,|y|^{-\frac{N-p}{p-1}} = |x|^{-\frac{N-p}{p-1}} -1\\
&\int_{\re^N} | \nabla w |^p dy - \( \frac{N-p}{p} \)^p \int_{\re^N} \frac{|w|^p}{|y|^p} dy \notag \\
\label{iH_p}
&=\int_{B_1} | \nabla u |^p dx - \( \frac{N-p}{p} \)^p \int_{B_1} \frac{|u|^p}{|x|^p \( 1- |x|^{\frac{N-p}{p-1}} \)^p} dx \ge 0.
\end{align}
And also, Ioku showed the improved Hardy inequality (\ref{iH_p}) yields the critical Hardy inequality (\ref{H_N}) with the best constant by taking the limit $p \nearrow N$. 
However, in the higher order or fractional order case, these beautiful and simple  structures and transformations might be nothing. 
In this note, we also treat the second order case. 
We observe that the transformation (\ref{itrans}) connets two singular solutions of $p-$Laplace equations: $-{\rm div}\,(|\nabla u|^{p-2} \nabla u ) = 0$ on $B_1$ and $\re^N$. On the other hand, by considering a transformation which connects two singular solutions of $p-$Laplace equation and $N-$Laplace equation, the authors \cite{ST} showed an equivalence between the classical Hardy inequality (\ref{H_p}) and the critical Hardy inequality (\ref{H_N}), for the Sobolev type inequalities, see \cite{Za, S}. Furthermore, for other transformations and another interesting equivalence, see \cite{HK, II(2016), CRT(2018)}.


This paper is organized as follows:
In section \ref{Pre}, necessary preliminary facts are presented. 
In section \ref{Hardy}, we give the limiting procedure as $p \nearrow N$ for the Hardy inequality in Theorem \ref{H_N from H_p}. We also apply our limiting procedure for the Rellich inequality. 
In section \ref{Poincare}, we consider {\it a limit} as $|\Omega| \searrow 0$ for the Poincar\'e inequality via our limiting procedure.

We fix several notations: 
$B_R$ denote a $N$-dimensional ball centered $0$ with radius $R$ and $\omega_{N-1}$ denotes an area of the unit sphere in $\re^N$. $|A|$ denotes the Lebesgue measure of a set $A \subset \re^N$ and $X_{{\rm rad}} = \{ \, u \in X \, | \, u \,\,\text{is radial} \, \}.$
Throughout the paper, if a radial function $u$ is written as $u(x) = \tilde{u}(|x|)$ by some function $\tilde{u} = \tilde{u}(r)$, we write $u(x)= u(|x|)$ with admitting some ambiguity.

%
%

\section{Preliminaries}\label{Pre}


In this section, we prepare several lemmas to show Theorems. 
The following pointwise estimate is well-known. we omit the proof.

\begin{lemma}\label{radial lemma}
For any radial functions $u \in C^1 (B_R) \cap C(\overline{B_R})$ satisfying $u|_{\pd B_R} =0$, for any $r \in (0, R)$ the following estimate holds.
\begin{align*}
|u(r)| &\le 
\begin{cases}
\vspace{0.5em}
\w_{N-1}^{-1} \| \nabla u\|_{L^p(B_R)} r^{-(N-1)}  &\text{if} \,\,\, p =1, \\
\vspace{0.5em}
\( \frac{p-1}{|\,N-p \,|} \)^{\frac{p-1}{p}} \w_{N-1}^{-\frac{1}{p}}  \| \nabla u\|_{L^p(B_R)} \left| \, r^{-\frac{N-p}{p-1}} -  R^{-\frac{N-p}{p-1}} \,\right|^{\frac{p-1}{p}} \quad &\text{if} \,\,\, 1 < p \not\eq N, \\
\w_{N-1}^{-\frac{1}{N}} \| \nabla u \|_{L^N(B_1 )} \( \log \frac{R}{r} \)^{\frac{N-1}{N}} &\text{if} \,\,\, p = N.
\end{cases}
\end{align*}
\end{lemma}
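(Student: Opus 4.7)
My plan is to reduce the $N$-dimensional statement to a one-variable inequality by writing $u(r)=-\int_r^R u'(s)\,ds$, then apply H\"older's inequality (or a trivial pointwise bound in the case $p=1$), and finally evaluate the resulting integral $\int_r^R s^{-(N-1)/(p-1)}\,ds$ in closed form. The three cases listed in the lemma are precisely the three regimes of this last integral: $p=1$ (where H\"older is unnecessary), $1<p\ne N$ (where the antiderivative is a power), and $p=N$ (the critical case where the antiderivative is a logarithm).

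First, since $u$ is radial we have $|\nabla u(x)|=|u'(|x|)|$, so polar coordinates give
\[
\|\nabla u\|_{L^p(B_R)}^p=\omega_{N-1}\int_0^R |u'(s)|^p s^{N-1}\,ds.
\]
Because $u\in C^1(B_R)\cap C(\overline{B_R})$ vanishes on $\partial B_R$, the fundamental theorem of calculus yields
\[
|u(r)|=\Bigl|\int_r^R u'(s)\,ds\Bigr|\le \int_r^R |u'(s)|\,ds \qquad \text{for all } r\in(0,R).
\]

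Next I would handle the three cases. For $p=1$, the weight $s^{-(N-1)}$ is decreasing on $[r,R]$, so multiplying and dividing by $s^{N-1}$ and pulling out $r^{-(N-1)}$ gives the first inequality immediately. For $1<p$, I insert the factor $s^{(N-1)/p}\cdot s^{-(N-1)/p}$ and apply H\"older with exponents $p$ and $p/(p-1)$:
\[
|u(r)|\le \Bigl(\int_r^R |u'(s)|^p s^{N-1}\,ds\Bigr)^{1/p}\Bigl(\int_r^R s^{-\frac{N-1}{p-1}}\,ds\Bigr)^{(p-1)/p}.
\]
The first factor is bounded by $\omega_{N-1}^{-1/p}\|\nabla u\|_{L^p(B_R)}$. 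For the second factor, when $p\ne N$ the exponent $-(N-1)/(p-1)$ differs from $-1$ and direct integration produces
\[
\int_r^R s^{-\frac{N-1}{p-1}}\,ds=\frac{p-1}{|N-p|}\,\Bigl|\,r^{-\frac{N-p}{p-1}}-R^{-\frac{N-p}{p-1}}\,\Bigr|,
\]
giving the middle line after raising to the power $(p-1)/p$. When $p=N$ the exponent is exactly $-1$, so the integral equals $\log(R/r)$, producing the final $(\log(R/r))^{(N-1)/N}$.

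I do not expect a genuine obstacle, since every step is a standard one-dimensional computation. The only point requiring minor care is the signing in the case $1<p\ne N$, because $(N-p)/(p-1)$ changes sign across $p=N$; the absolute value in the lemma statement is exactly what absorbs this ambiguity. As a conceptual bonus, this computation makes transparent why the bound degenerates to a logarithm as $p\nearrow N$, which is the very phenomenon that the rest of the paper exploits.
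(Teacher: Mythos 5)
Your proof is correct and is exactly the standard argument the paper has in mind when it states that this estimate ``is well-known'' and omits the proof: writing $|u(r)|\le\int_r^R|u'(s)|\,ds$, applying H\"older against the weight $s^{(N-1)/p}$, and evaluating $\int_r^R s^{-(N-1)/(p-1)}\,ds$ in the three regimes. All constants and the sign analysis across $p=N$ check out, so there is nothing to add.
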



%
%

When the potential function is not radially decreasing, we can not apply rearrangement technique. Instead of rearrangement, we prepare the following lemma which can reduces a problem to the radial setting.

\begin{lemma}\label{radial nomi}
Let $1 < q <\infty$, $V=V(x)$ be a radial function on $B_R$. 
If there exists $C>0$ such that for any radial functions $u \in C_c^1(B_R)$ the inequality
\begin{equation}\label{rad}
C \int_{B_R} |u|^q V(x)\, dx \le \int_{B_R} | \nabla u |^q\, dx < \infty
\end{equation}
holds, then for any functions $w \in C_c^1(B_R)$ the inequality 
\begin{equation}\label{non-rad}
C \int_{B_R} |w|^q V(x)\, dx \le \int_{B_R} \left| \nabla w \cdot \frac{x}{|x|} \right|^q\, dx < \infty
\end{equation}
holds.
\end{lemma}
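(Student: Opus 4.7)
The plan is to reduce (\ref{non-rad}) to a one-dimensional weighted estimate along each radial ray, apply the hypothesis (\ref{rad}) there, and then integrate over directions. Since $V$ is radial and $\nabla w \cdot x/|x|$ is precisely the radial derivative $\partial_r w$, polar coordinates yield
\begin{align*}
\int_{B_R} |w|^q V(x)\,dx &= \int_{S^{N-1}} \int_0^R |w(r\sigma)|^q\, V(r)\, r^{N-1}\,dr\,d\sigma,\\
\int_{B_R} \left| \nabla w \cdot \frac{x}{|x|} \right|^q dx &= \int_{S^{N-1}} \int_0^R |\partial_r w(r\sigma)|^q\, r^{N-1}\,dr\,d\sigma,
\end{align*}
so it is enough to produce, for each fixed $\sigma \in S^{N-1}$, the one-dimensional estimate
\begin{equation*}
C \int_0^R |w(r\sigma)|^q V(r) r^{N-1}\,dr \le \int_0^R |\partial_r w(r\sigma)|^q r^{N-1}\,dr
\end{equation*}
and then integrate against $d\sigma$.

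For fixed $\sigma$, the natural candidate is the radial function $W_\sigma(x):=w(|x|\sigma)$: a direct computation with $|\nabla W_\sigma(x)| = |\partial_r w(|x|\sigma)|$ shows that (\ref{rad}) applied to $W_\sigma$, rewritten in polar coordinates and divided by $\w_{N-1}$, is exactly the target 1D estimate. The obstruction is that $W_\sigma$ is only Lipschitz at the origin and fails to be $C^1$ there unless $\nabla w(0)\cdot\sigma = 0$, so it need not belong to $C_c^1(B_R)$. I would remedy this by smooth truncation: choose $\phi_\ep\in C^\infty([0,R])$ with $\phi_\ep\equiv 0$ on $[0,\ep]$ and $\phi_\ep\equiv 1$ on $[2\ep,R]$, and apply (\ref{rad}) to the bona fide radial $C_c^1(B_R)$ function
\begin{equation*}
u_{\sigma,\ep}(x) := \phi_\ep(|x|)\, w(|x|\sigma),
\end{equation*}
which vanishes near the origin. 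Rewriting in polar coordinates yields a truncated version of the target inequality whose right-hand side is $\int_0^R |\phi_\ep'(r) w(r\sigma) + \phi_\ep(r)\partial_r w(r\sigma)|^q r^{N-1}\,dr$.

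Finally one must let $\ep \to 0^+$. The left-hand side and the ``main'' contribution $\phi_\ep\,\partial_r w(r\sigma)$ on the right converge to their targets by dominated convergence, using that $w$ and $\nabla w$ are bounded on $B_R$. I expect the main obstacle to be controlling the cutoff-error term
\begin{equation*}
\int_\ep^{2\ep} |\phi_\ep'(r)|^q\, |w(r\sigma)|^q\, r^{N-1}\,dr
\end{equation*}
uniformly in $\sigma$. With the standard choice $|\phi_\ep'| \lesssim 1/\ep$ this is $O(\ep^{N-q})\|w\|_\infty^q$, which is harmless when $q<N$; in the borderline case $q=N$ (the one actually needed for Theorem \ref{H_N from H_p}) one has to use instead a logarithmic cutoff with $|\phi_\ep'(r)| \lesssim 1/(r\log(1/\ep))$, giving an error of order $(\log(1/\ep))^{-(N-1)}$. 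Once this error is shown to vanish, integrating the resulting 1D inequality over $S^{N-1}$ delivers (\ref{non-rad}) with the same constant $C$.
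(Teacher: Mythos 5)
Your argument takes a genuinely different route from the paper's. The paper forms a single radial competitor, the spherical $L^q$-average
$W(r)=\bigl(\w_{N-1}^{-1}\int_{\pd B_1}|w(r\w)|^q\,dS_\w\bigr)^{1/q}$,
observes that $\int_{B_R}|W|^qV\,dx=\int_{B_R}|w|^qV\,dx$ exactly (Fubini, $V$ radial) and that $|W'(r)|\le\w_{N-1}^{-1/q}\bigl(\int_{\pd B_1}|\pd_r w(r\w)|^q\,dS_\w\bigr)^{1/q}$ by H\"older (essentially Minkowski's integral inequality), and then applies (\ref{rad}) once to $W$. You slice by rays instead: you apply (\ref{rad}) to the revolved restriction $w(|x|\sigma)$ for each direction $\sigma$ and integrate over $\sigma$. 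Both decompositions are legitimate and yield the same constant; the paper's buys uniformity in $q$ and avoids the origin problem altogether, since the spherical average has no corner there (one checks $(W^q)'(0)=0$), while yours isolates the hypothesis to a genuinely one-dimensional statement per ray, which is conceptually transparent.

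The gap is the range of $q$. Your cutoff-to-zero regularization forces you to absorb the error $\int_\ep^{2\ep}|\phi_\ep'|^q|w|^q r^{N-1}\,dr$, and, as you note, this vanishes only for $q<N$ (standard cutoff) or $q=N$ (logarithmic cutoff); for $q>N$ neither works --- the error is of order $\ep^{N-q}$, respectively $\ep^{2(N-q)}(\log(1/\ep))^{-q}$, both divergent. You dismiss this on the grounds that Theorem \ref{H_N from H_p} only needs $q=N$, but the lemma is stated for all $1<q<\infty$ and is invoked again in Theorem \ref{H_p from P}, where the exponent runs up to $\frac{N^2}{N-1}>N$. The repair within your framework is not to truncate $w(r\sigma)$ to zero but to flatten it: on $[0,\delta]$ replace it by a $C^1$ interpolation to the constant $w(\delta\sigma)$ with derivative bounded by $\|\nabla w\|_{L^\infty}$; the extra gradient cost is then $O(\|\nabla w\|_{L^\infty}^q\delta^N)\to0$ for every $q$, and the left-hand side converges because $V(r)r^{N-1}$ is integrable near $0$ (for $V\ge0$ this follows from the hypothesis by testing (\ref{rad}) with a radial cutoff equal to $1$ near the origin). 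With that change --- or by switching to the paper's spherical average --- the proof is complete.
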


\begin{proof}
For any $w \in C_c^1(B_R)$, define a radial function $W$ as follows.
\begin{align*}
W(r) = \( \w_{N-1}^{-1} \int_{\w \in \pd B_1} | w(r\w ) |^q\,dS_{\w} \)^{\frac{1}{q}} \quad (0 \le r \le R).
\end{align*}
Then we have
\begin{align*}
|W \,'(r) | &= \w_{N-1}^{-\frac{1}{q}} \( \int_{\pd B_1} | w(r\w ) |^q\,dS_{\w} \)^{\frac{1}{q}-1} \int_{\pd B_1} | w |^{q-1} \left| \frac{\pd w}{\pd r} \right| \,dS_{\w} \\
&\le \w_{N-1}^{-\frac{1}{q}} \( \int_{\pd B_1} \left| \frac{\pd w}{\pd r} (r\w ) \right|^q \,dS_{\w} \)^{\frac{1}{q}}.
\end{align*}
Therefore we have
\begin{align}\label{right}
\int_{B_R} | \nabla W |^q\, dx &\le \int_{B_R} \left| \nabla w \cdot \frac{x}{|x|} \right|^q\, dx, \\
\label{left}
\int_{B_R} |W|^q V(x)\, dx &= \int_{B_R} |w|^q V(x)\, dx.
\end{align}
From (\ref{rad}) for $W$, (\ref{right}), and (\ref{left}), we obtain (\ref{non-rad}) for any $w$.
\end{proof}

We show the pointwise estimates for radial functions and their derivative in $W_0^{2, p}(B_R)$ when $N \ge 3$ and $p \ge 1$. 
When $p=1$ or $N =2$, the pointwise estimates are already shown by \cite{CRT(2009), CRT(2010)}. 

\begin{lemma}\label{2nd-order radial lemma}
Let $N \ge 3$ and $u \in C^2(B_R) \cup C(\overline{B_R})$ be a radial function satisfying $u|_{\pd B_R} =0$. Then the following pointwise estimates hold for any $r \in (0, R)$.
\begin{align}\label{estimate u}
|u(r)| &\le 
\begin{cases}
\vspace{0.5em}
\frac{p}{|\,N-2p \,|} \w_{N-1}^{-\frac{1}{p}} N^{\frac{1}{p}-1}  \| \lap u\|_{L^p(B_R)} \left| \, r^{-\frac{N-2p}{p}} -  R^{-\frac{N-2p}{p}} \,\right| \quad &\text{if} \,\,\, 1 \le p \not\eq \frac{N}{2}, \\
\vspace{0.2em}
\w_{N-1}^{-\frac{2}{N}} N^{\frac{2}{N}-1} \| \lap u\|_{L^{N/2}(B_R)} \log \frac{R}{r} &\text{if} \,\,\, p = \frac{N}{2}.
\end{cases} \\ 
\notag \\
\label{estimate u'}
|u'(r)| &\le \frac{\| \lap u\|_{L^p(B_R)}}{\w_{N-1}^{\frac{1}{p}} N^{1-\frac{1}{p}}} r^{-\frac{N-p}{p}} \quad \text{for any} \,p \ge 1.
\end{align}
\end{lemma}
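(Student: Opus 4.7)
The plan is to exploit the radial form of the Laplacian, $\Delta u(r) = r^{-(N-1)}(r^{N-1}u'(r))'$, to recover $u'$ from $\Delta u$ by a single integration, apply H\"older's inequality to produce the $L^p$ norm of $\Delta u$, and then integrate once more from $r$ to $R$ using the boundary condition $u(R) = 0$ to obtain $|u(r)|$.

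First I would observe that since $u \in C^2(B_R)$ is radial, $r^{N-1}u'(r) \to 0$ as $r \to 0$, so integrating the identity $(r^{N-1}u'(r))' = r^{N-1}\Delta u(r)$ on $(0,r)$ yields
\begin{equation*}
r^{N-1}u'(r) = \int_0^r s^{N-1}\Delta u(s)\,ds.
\end{equation*}
Applying H\"older's inequality with exponents $p$ and $p'=p/(p-1)$ (or reading the case $p=1$ directly), together with $\int_0^r s^{N-1}\,ds = r^N/N$ and $\|\Delta u\|_{L^p(B_R)}^p = \w_{N-1}\int_0^R s^{N-1}|\Delta u(s)|^p\,ds$, gives
\begin{equation*}
|r^{N-1}u'(r)| \le \frac{r^{N/p'}}{N^{1/p'}}\,\w_{N-1}^{-1/p}\,\|\Delta u\|_{L^p(B_R)}.
\end{equation*}
Since $N/p' - (N-1) = -(N-p)/p$, this rearranges to exactly (\ref{estimate u'}).

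For the pointwise bound on $u$ itself, the boundary condition $u(R)=0$ gives $u(r) = -\int_r^R u'(s)\,ds$, so inserting (\ref{estimate u'}) yields
\begin{equation*}
|u(r)| \le \frac{\|\Delta u\|_{L^p(B_R)}}{\w_{N-1}^{1/p}\,N^{1-1/p}} \int_r^R s^{-(N-p)/p}\,ds.
\end{equation*}
I would then split on whether the exponent $-(N-p)/p$ equals $-1$. When $p \neq N/2$, the antiderivative is $\frac{p}{2p-N}\,s^{(2p-N)/p}$, and taking absolute values across the sign of $2p-N$ produces the factor $\frac{p}{|N-2p|}\bigl|r^{-(N-2p)/p} - R^{-(N-2p)/p}\bigr|$, giving the first line of (\ref{estimate u}). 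When $p = N/2$, the antiderivative is $\log s$, producing $\log(R/r)$ and the second line of (\ref{estimate u}) after substituting $1/p = 2/N$ and $1-1/p = 1-2/N$.

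No step is genuinely hard: the only points requiring care are (i) justifying that the boundary term at $r=0$ in the integration of $(r^{N-1}u')'$ vanishes, which uses the $C^2$ regularity together with radiality so that $u'(0)=0$, and (ii) tracking the sign of $2p-N$ when extracting absolute values in the case $p \neq N/2$ so as to uniformly write the bound as $|r^{-(N-2p)/p} - R^{-(N-2p)/p}|$.
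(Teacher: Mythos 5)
Your proof is correct, and every constant comes out matching the statement (I checked: the Hölder step gives $|r^{N-1}u'(r)|\le (r^N/N)^{1-1/p}\,\w_{N-1}^{-1/p}\|\lap u\|_{L^p(B_R)}$, which is exactly (\ref{estimate u'}), and the second integration reproduces both lines of (\ref{estimate u})). The route, however, is presented differently from the paper's. The paper does not work directly with $\lap u = r^{1-N}(r^{N-1}u')'$; instead it introduces the change of variables $w(t)=Au(r)$ with $r=R(t+1)^{-1/(N-2)}$ and $A^p=\w_{N-1}R^{N-2p}(N-2)^{2p-1}$, under which $\lap u$ becomes a pure second derivative $w''$ against an explicit weight, and then integrates twice using $w(0)=w'(\infty)=0$ with Hölder. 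The two arguments are computationally equivalent — your condition $s^{N-1}u'(s)\to 0$ as $s\to 0$ is precisely the paper's $w'(\infty)=0$, and your two integrations in $r$ correspond to the paper's two integrations in $t$ — but yours is the more elementary, ``undressed'' version, requiring no auxiliary transformation. What the paper's transformation buys is reusability: the same device (\ref{trans})--(\ref{lap to ''}) is the engine of the iteration in Proposition \ref{higher order radial lemma} for higher-order derivatives, and it is borrowed from \cite{CRT(2010)}, so setting it up here amortizes the cost. The only points needing care in your version are the ones you already flagged: the vanishing of $s^{N-1}u'(s)$ at the origin (immediate from $u\in C^2$ radial, since $u'$ is bounded near $0$ and $N\ge 3$) and the sign bookkeeping in the case $p\ne N/2$. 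Both are handled; no gap.
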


\begin{proof}
Consider the following transformation (ref. \cite{CRT(2010)}):
\begin{align}\label{trans}
w(t) = A u(r), \,\,\text{where}\,\, r= R (t +1)^{-\frac{1}{N-2}}\,\,\text{and}\,\,A^p = \w_{N-1} R^{N-2p} (N-2)^{2p -1}.
\end{align}
Then we have
\begin{align}\label{lap to ''}
w''(t) = \frac{A R^2}{(N-2)^2} (t+1)^{-2\frac{N-1}{N-2}} \( u''(r) + \frac{N-1}{r} u'(r) \) 
= \frac{A R^2}{(N-2)^2} (t+1)^{-2\frac{N-1}{N-2}} \lap u 
\end{align}
which yields that
\begin{align*}
\int_{B_R} |\lap u|^p \,dx = \int_0^{\infty} | w''(t) |^p (t+1)^{2\frac{(N-1)(p-1)}{N-2}} \,dt.
\end{align*}
Since $w(0)= w'(\infty)=0$, we have
\begin{align*}
w(t) &= - \int_0^t \int_s^\infty w''(u) \,du \,ds \\
&\le \int_0^t \( \int_0^\infty |w''(u)|^p (u+1)^{2\frac{(N-1)(p-1)}{N-2}} \,du \)^{\frac{1}{p}} \( \int_s^\infty (u+1)^{-2\frac{N-1}{N-2}}\,du \)^{\frac{p-1}{p}} \,ds \\
&= \( \frac{N-2}{N} \)^{\frac{p-1}{p}} \| \lap u\|_{L^p(B_R)} \int_0^t (s+1)^{-\frac{N(p-1)}{(N-2)p}} \,ds \\
&= 
\begin{cases}
\( \frac{N-2}{N} \)^{\frac{p-1}{p}} \frac{(N-2)p}{N-2p} \| \lap u\|_{L^p(B_R)} \( (t+1)^{\frac{N-2p}{(N-2)p}} -1\)  \quad &\text{if} \,\, p \not= \frac{N}{2}, \\
\( \frac{N-2}{N} \)^{1-\frac{2}{N}}  \| \lap u\|_{L^{N/2}(B_R)} \log (t+1) &\text{if} \,\, p = \frac{N}{2}.
\end{cases}
\end{align*}
Therefore we obtain (\ref{estimate u}). On the other hand, since
\begin{align*}
w'(t) = - \int_t^\infty w''(u) \,du 
\le \( \frac{N-2}{N} \)^{\frac{p-1}{p}} \| \lap u\|_{L^p(B_R)} (t+1)^{\frac{N(p-1)}{(N-2)p}}
\end{align*}
and $w'(t) = - A u'(r) \frac{R}{N-2} (t+1)^{-\frac{N-1}{N-2}}$, we also obtain (\ref{estimate u'}).
\end{proof}

For much higer order case, see Proposition \ref{higher order radial lemma} in \S \ref{Hardy}.

%
%

\section{A limiting procedure for the Hardy type inequalities}\label{Hardy}

\subsection{Proof of Theorem \ref{H_N from H_p}: The Hardy inequality}\label{limiting procedure}

First, we prepare for making the optimal constant $(\frac{N-p}{p})^p (\searrow 0$ as $p \nearrow N$) compete with $\int_{B_1} \frac{|u|^p}{|x|^p} \,dx \,(\nearrow \infty$ as $p \nearrow N$, in general). 

Let $p_k=N-\frac{1}{k}$ for $k \in \N$, $f \in C^1 (-\infty, \infty)$ be a  monotone-decreasing function which satisfies $\lim_{t \to +\infty} f(t) =0$, and $\{ \phi_k \}_{k \in \Z} \subset  C_c^{\infty}(\re^N \setminus \{ 0\})$ be radial functions which satisfy
\begin{align*}
&(i) \,\sum_{k=-\infty}^{+\infty} \phi_k (x)^N =1, \,0 \le \phi_k (x) \le 1 \,\, \( \forall x \in \re^N \setminus \{ 0\} \), \\
&(ii) \,\,\text{supp}\,\phi_k \subset B_{f(k)} \setminus B_{f(k+2)}.
\end{align*}
For any radial function $u \in C_c^1 (B_1)$, set $u_k = u \,\phi_k$ and $A_k =$ supp\,$u_k \subset B_1 \cap \( B_{f(k)} \setminus B_{f(k+2)} \)$. 
In order to obtain {\it a limit} for the classical Hardy inequality (\ref{H_p}) as $p \nearrow N$, the left-hand side of (\ref{H_p}) for $u_k$ and $p_k$ must not be vanishing as $k \to \infty$. We shall determine such $f$. Note that if $x \in A_k$, then $f(k+2) \le |x| \le f(k)$ and $k \le f^{-1}(|x|) \le k+2$. By Lemma \ref{radial lemma} we have
\begin{align}\label{H_p_k}
&\( \frac{N-p_k}{p_k} \)^{p_k} \int_{A_k} \frac{|u_k|^{p_k}}{|x|^{p_k}} dx 
= p_k^{-p_k} \int_{A_k} \( \, \frac{|u_k (x)|}{|x| \,k} \, \)^{N-\frac{1}{k}} dx \notag \\
&\ge C \int_{A_k} \frac{|u_k (x)|^N}{|x|^N \( f^{-1}(|x|) \)^N} \( \, \frac{|x|\, k}{|u_k (x)|} \, \)^{\frac{1}{k}} dx \notag \\
&\ge C \, \| \nabla u_k \|_{L^N(A_k)}^{-\frac{1}{k}} \int_{A_k} \frac{|u_k (x)|^N}{|x|^N \( f^{-1}(|x|) \)^N} \(  f(k+2) \( \log \frac{f(k)}{f(k+2)} \)^{-\frac{N-1}{N}} \, \)^{\frac{1}{k}} dx.
\end{align}
Therefore, if for any $k \in \N$ the function $f$ satisfies
\begin{align}\label{f}
\(  f(k+2) \( \log \frac{f(k)}{f(k+2)} \)^{-\frac{N-1}{N}} \, \)^{\frac{1}{k}} \ge C >0,
\end{align}
then the information on the left-hand side of the classical Hardy inequality (\ref{H_p}) is not vanishing in this limiting procedure. From (\ref{f}) and l'H\^opital's rule, we have an ordinary differential inequality for $f$ as follows:
\begin{align*}
\frac{d}{dt} f (t) \ge -C f (t)
\end{align*}
whose solution satisfies $f(t) \ge e^{-Ct}$. Thus $f^{-1}(t) \ge \frac{1}{C} \log \frac{1}{t}$. We belive that the above caluculation and consideration give some explanation of appearance of the logarithmic function at the Hardy potential in the limiting case $p=N$.

Hereinafter we set $f(t)= e^{-t}$.


\begin{proof}[{\bf Proof of Theorem \ref{H_N from H_p}}] 
From Lemma \ref{radial nomi}, it is enough to show the inequality (\ref{H_N non-sharp}) for any radial functions $u \in C_c^{1}(B_1)$. 
Applying the classical Hardy inequality (\ref{H_p}) for $u_k$ and $p_k$ for $k \ge 1$, we have 
\begin{align*}
\( \frac{N-p_k}{p_k} \)^{p_k} \int_{A_k} \frac{|u_k|^{p_k}}{|x|^{p_k}} dx \le \int_{A_k} | \nabla u_k |^{p_k} dx \le |A_k |^{1-\frac{p_k}{N}} \| \nabla u_k \|_N^{N-\frac{1}{k}}.
\end{align*}
By (\ref{H_p_k}) and (\ref{f}), for $k \ge 1$
\begin{align*}
C \int_{A_k} \frac{|u_k|^N}{|x|^N \( \log \frac{1}{|x|} \)^N} \,dx \le \int_{A_k} | \nabla u_k |^N dx.
\end{align*}
Since $k \le \log \frac{1}{|x|}$ for $x \in A_k$, 
\begin{align}\label{H_N_k}
C \int_{A_k} \frac{|u_k|^N}{|x|^N \( \log \frac{a}{|x|} \)^\beta} \,dx \le b_k \int_{A_k} | \nabla u_k |^N dx
\end{align}
for $k \ge 1, a>1$, and $\beta > 2N$, where $b_k$ is given by
\begin{align*}
b_k=
\begin{cases}
k^{N-\beta} \quad &\text{if} \,\,\,k\ge 1,\\
1 &\text{if} \,\,\,k=0, -1,\\
0 &\text{if} \,\,\,k\le -2.
\end{cases}
\end{align*}
Here, note that the inequalities (\ref{H_N_k}) with $k=0, -1$ come form the Poincar\'e inequality and the boundedness of the function $|x|^{-N}(\log \frac{a}{|x|})^{-\beta}$ on $A_0 \cup A_{-1} \subset B_1 \setminus B_{e^{-2}}$.
Summing both sides on (\ref{H_N_k}), we have
\begin{align*}
C \sum_{k \in \Z} \int_{B_1} \frac{|u \phi_k|^N}{|x|^N \( \log \frac{a}{|x|} \)^\beta} \,dx \le \sum_{k \in \Z} b_k \int_{A_k} | \nabla (u \phi_k ) |^N dx
\end{align*}
which yields that
\begin{align}\label{H_N_k sum}
C \int_{B_1} \frac{|u |^N}{|x|^N \( \log \frac{a}{|x|} \)^\beta} \,dx &\le 2^{N-1} \sum_{k \in \Z} b_k \int_{A_k} \phi_k^N | \nabla u |^N + |u|^N |\nabla \phi_k |^N dx \notag \\
&\le 2^{N-1} \int_{B_1} | \nabla u |^N dx + C \sum_{k=1}^{+\infty} b_k e^{kN} \int_{A_k} |u|^N \, dx.
\end{align}
By Lemma \ref{radial lemma} we have
\begin{align*}
b_k e^{kN} \int_{A_k} |u|^N \, dx 
&\le C b_k e^{kN} \| \nabla u \|_N^N \int_{A_k} \( \log \frac{1}{|x|} \)^{N-1} \, dx \\
&\le C b_k e^{kN} \| \nabla u \|_N^N \int_k^{k+2} s^{N-1} e^{-sN} \, ds \le C b_k k^{N-1} \| \nabla u \|_N^N.
\end{align*}
From (\ref{H_N_k sum}) we have
\begin{align*}
C \int_{B_1} \frac{|u |^N}{|x|^N \( \log \frac{a}{|x|} \)^\beta} \,dx 
&\le C \int_{B_1} | \nabla u |^N dx + C \( \sum_{k=1}^{+\infty} k^{-1-(\beta -2N)} \) \int_{B_1} |\nabla u|^N \, dx \\
&\le C \int_{B_1} |\nabla u|^N \, dx.
\end{align*}
\end{proof}

\subsection{The Rellich inequality}\label{Rellich}

Let $1< p< \frac{N}{2}$. The classical Rellich inequality:
\begin{equation}
\label{R_p}
\( \frac{N(p-1)(N-2p)}{p} \)^p \int_{B_1} \frac{|u|^p}{|x|^{2p}} dx \le \int_{B_1} | \lap u |^p dx
\end{equation}
holds for all $u \in W^{2,p}_0(B_1)$, where $W_0^{2,p}(B_1)$ is a completion of $C_c^{\infty}(B_1)$ with respect to the norm $\| \lap (\cdot )\|_{L^p(B_1)}$ (see \cite{Rellich}, \cite{Davies-Hinz}, \cite{Mitidieri}). 
In this section, we apply our limiting procedure in \S \ref{limiting procedure} to the Rellich inequality (\ref{R_p}) as $p \nearrow \frac{N}{2}$.

\begin{theorem}\label{R_N from R_p}
The following non-sharp critical Rellich inequality (\ref{R_N non-sharp}) can be derived by a limiting procedure for the classical Rellich inequality (\ref{R_p}) as $p \nearrow \frac{N}{2}$. 
\begin{align}\label{R_N non-sharp}
C \int_{B_1} \frac{|u|^{\frac{N}{2}}}{|x|^N \( \log \frac{a}{|x|} \)^\beta} \,dx \le \int_{B_1} | \lap u |^{\frac{N}{2}} \,dx  \quad (u \in C_{c, \,\text{rad}}^2(B_1)).
\end{align}
Here $\beta > N+2, a>1$, and the constant $C= C(\beta, a, N)>0$ is independent of $u$.
\end{theorem}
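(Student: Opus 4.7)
The plan is to imitate the proof of Theorem \ref{H_N from H_p}, replacing the gradient by the Laplacian and the critical exponent $N$ by $N/2$. I would reuse the same radial partition-of-unity family $\{\phi_k\}_{k\in\Z}$ introduced in \S\ref{limiting procedure} (with $f(t)=e^{-t}$), set $u_k=u\phi_k$ and $p_k=\tfrac{N}{2}-\tfrac{1}{k}$, and apply the classical Rellich inequality (\ref{R_p}) to $u_k$ at the exponent $p_k$. Because $u$ is assumed radial, each $u_k$ is radial, so no analogue of Lemma \ref{radial nomi} is needed here. The algebraic heart of the argument is that the Rellich constant $\bigl(N(p-1)(N-2p)/p\bigr)^p$ contains the vanishing factor $(N-2p_k)^{p_k}=(2/k)^{p_k}$, which must be made to compete with the blow-up of $\int|u_k|^{p_k}/|x|^{2p_k}\,dx$; the scale $f(t)=e^{-t}$ was designed in \S\ref{limiting procedure} precisely so that this balance succeeds on the annuli $A_k$.

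On the left of the Rellich inequality for $u_k$ I would use $|x|^{-2p_k}=|x|^{-N}|x|^{2/k}$ together with the factorisation $(|u_k|/k)^{p_k}=(|u_k|/k)^{N/2}(k/|u_k|)^{1/k}$ to extract the target integrand $|u_k|^{N/2}k^{-N/2}|x|^{-N}$ multiplied by an error factor $(k|x|^2/|u_k|)^{1/k}$. On $A_k$ the quantity $(k|x|^2)^{1/k}$ is bounded below uniformly in $k$ since $|x|\ge e^{-(k+2)}$, while applying Lemma \ref{2nd-order radial lemma} to the radial function $u_k$ in the ball $B_{e^{-k}}$ (where $u_k$ vanishes on the boundary) gives $|u_k|\le C\|\Delta u_k\|_{N/2}$ on $A_k$, so $|u_k|^{1/k}$ is controlled by $\|\Delta u_k\|_{N/2}^{1/k}$. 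On the right side H\"older gives $\int_{A_k}|\Delta u_k|^{p_k}\,dx\le |A_k|^{2/(kN)}\|\Delta u_k\|_{N/2}^{p_k}$ with a bounded volume factor. Combining these and cancelling the $\|\Delta u_k\|_{N/2}^{-1/k}$ factor yields $\int_{A_k}|u_k|^{N/2}k^{-N/2}|x|^{-N}\,dx\le C\int_{A_k}|\Delta u_k|^{N/2}\,dx$, which, using $\log(a/|x|)\asymp k$ on $A_k$, converts into the local weighted estimate $\int_{A_k}|u_k|^{N/2}|x|^{-N}(\log(a/|x|))^{-\beta}\,dx\le Cb_k\int_{A_k}|\Delta u_k|^{N/2}\,dx$, with $b_k=k^{N/2-\beta}$ for $k\ge1$, $b_k=1$ for $k\in\{0,-1\}$ (handled by the Poincar\'e inequality and the boundedness of the weight away from the origin), and $b_k=0$ for $k\le-2$.

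Summing over $k$, I would use $\sum_k\phi_k^{N/2}\ge1$ on the left and expand $\Delta u_k=\phi_k\Delta u+2\nabla u\cdot\nabla\phi_k+u\Delta\phi_k$ on the right. The main contribution gives $C\int_{B_1}|\Delta u|^{N/2}\,dx$ because $\sum_k b_k\phi_k^{N/2}$ is pointwise bounded. The main obstacle will be to control the two cross terms. Using Lemma \ref{2nd-order radial lemma} one has $|u(x)|\le C\|\Delta u\|_{N/2}\log(1/|x|)$, which is of order $k\|\Delta u\|_{N/2}$ on $A_k$, and $|u'(x)|\le C\|\Delta u\|_{N/2}|x|^{-1}$, while $|\nabla\phi_k|\le C/|x|$ and $|\Delta\phi_k|\le C/|x|^2$ on $A_k$. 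Integrating over $A_k$ (of volume $\sim e^{-kN}$), both cross-term integrals are controlled by $Ck^{N/2}\|\Delta u\|_{N/2}^{N/2}$, and after multiplication by $b_k$ the resulting tail $\sum_{k\ge1}k^{N-\beta}$ (after also absorbing the $k^{1/k}$ and $|A_k|^{2/(kN)}$ remainders from the earlier steps) converges under the hypothesis $\beta>N+2$, which concludes the scheme of the proof.
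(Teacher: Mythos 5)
Your proposal is correct and follows essentially the same route as the paper's proof: the same dyadic-in-$\log$ partition $u_k=u\phi_k$ on annuli $A_k$ with $f(t)=e^{-t}$, the Rellich inequality applied at $p_k\nearrow N/2$, Lemma \ref{2nd-order radial lemma} used both to control the $1/k$-power error factor on the left and to bound the cross terms $u\Delta\phi_k$ and $\nabla u\cdot\nabla\phi_k$ on the right, and the same weights $b_k=k^{N/2-\beta}$ summed under $\beta>N+2$. The only deviations are cosmetic (you take $p_k=\tfrac N2-\tfrac1k$ instead of $\tfrac N2-\tfrac1{2k}$, and keep $\sum_k\phi_k^N=1$ while using $\sum_k\phi_k^{N/2}\ge 1$ rather than renormalizing the partition as the paper does), and neither affects the argument.
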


\begin{remark}
In the limiting case $p = \frac{N}{2}$, the inequality (\ref{R_N non-sharp}) with the optimal exponent $\beta$ and its best constant is already known, see \cite{DHA}, \cite{AS}. 
\end{remark}




\begin{proof}
We shall show (\ref{R_N non-sharp}) for any $u \in C_{c, \, \text{rad}}^2 (B_1)$. The strategy of the proof is the same as it in \S \ref{limiting procedure}. 

Let $p_k = \frac{N}{2}-\frac{1}{2k}$ for $k \ge 2$ and only condition (i) of $\phi_k$ in \S \ref{limiting procedure} is changed to $\sum_{k=-\infty}^{+\infty} \phi_k (x)^{\frac{N}{2}} =1$ for any $x \in \re^N \setminus \{ 0 \}$.
Applying the classical Rellich inequality (\ref{R_p}) for $u_k = u\,\phi_k$ for $u \in C_{c, \, \text{rad}}^2 (B_1)$ and $p_k$ for $k \ge 2$, we have 
\begin{align}\label{R_p_k}
\( \frac{(N-2 p_k)(p_k -1) N}{p_k} \)^{p_k} \int_{A_k} \frac{|u_k|^{p_k}}{|x|^{2 p_k}} dx \le \int_{A_k} | \lap u_k |^{p_k} dx.
\end{align}
On the left-hand side of (\ref{R_p_k}), by (\ref{estimate u}) in Lemma \ref{2nd-order radial lemma} we  have
\begin{align*}
&\( \frac{(N-2 p_k)(p_k -1) N}{p_k} \)^{p_k} \int_{A_k} \frac{|u_k|^{p_k}}{|x|^{2 p_k}} dx 
\ge C \int_{A_k} \( \, \frac{|u_k (x)|}{|x|^2 k} \, \)^{\frac{N}{2} - \frac{1}{2k}} dx \\
&\ge C \int_{A_k} \frac{|u_k (x)|^{\frac{N}{2}}}{|x|^N \( f^{-1}(|x|) \)^{\frac{N}{2}}} \( \, \frac{|x|^2 k}{|u_k (x)|} \, \)^{\frac{1}{2k}} dx \\
&= C \| \lap u_k \|_{L^{\frac{N}{2}}(A_k)}^{-\frac{1}{2k}} \int_{A_k} \frac{|u_k (x)|^{\frac{N}{2}}}{|x|^N \( f^{-1}(|x|) \)^{\frac{N}{2}}} \( \, f(k+2)^2 \(  \log \frac{f(k)}{f(k+2)} \)^{-1} \, \)^{\frac{1}{2k}} dx.
\end{align*}
If we choose $f(t)= e^{-t}$, then the left-hand side of (\ref{R_p_k}) is not vanishing as $k \to \infty$. Thus we set $f(t)= e^{-t}$ hereinafter.
In the similar way to it in \S \ref{limiting procedure}, for $a>1, k \in \Z$, and $\beta > N+2$ we have
\begin{align}\label{R_N_k}
C \int_{A_k} \frac{|u_k|^\frac{N}{2}}{|x|^N \( \log \frac{a}{|x|} \)^\beta} \,dx \le b_k \int_{A_k} | \lap u_k |^{\frac{N}{2}} dx,
\end{align}
where $b_k$ is given by
\begin{align*}
b_k=
\begin{cases}
k^{\frac{N}{2}-\beta} \quad &\text{if} \,\,\,k\ge 2,\\
1 &\text{if} \,\,\,k=1, 0, -1,\\
0 &\text{if} \,\,\,k\le -2.
\end{cases}
\end{align*}
Note that we used the second order Poincar\'e inequality: $C \| u\|_q \le \| \lap u \|_q$ to show (\ref{R_N_k}) in the case where $k \le 1$, see e.g. \cite{GGS}.
Then we have
\begin{align*}
C \sum_{k \in \Z} \int_{B_1} \frac{|u \phi_k|^{\frac{N}{2}}}{|x|^N \( \log \frac{a}{|x|} \)^\beta} \,dx \le \sum_{k \in \Z} b_k \int_{A_k} | \lap (u \phi_k ) |^{\frac{N}{2}} dx
\end{align*}
which yields that
\begin{align}\label{R_N_k sum}
C \int_{B_1} \frac{|u |^{\frac{N}{2}}}{|x|^N \( \log \frac{a}{|x|} \)^\beta} \,dx 
&\le C \sum_{k=2}^\infty b_k \int_{A_k} |\lap \phi_k|^{\frac{N}{2}} | u |^{\frac{N}{2}} + \phi_k^{\frac{N}{2}} |\lap u|^{\frac{N}{2}} + |\nabla u|^{\frac{N}{2}} |\nabla \phi_k |^{\frac{N}{2}} dx \notag \\
&=: C \sum_{k=2}^\infty ( I_1 + I_2 + I_3 ). 
\end{align}
Since $|\lap \phi_k (x)| \le C e^{2(k+1)}$ for $x \in A_k$, by (\ref{estimate u}) in Lemma \ref{2nd-order radial lemma} we have
\begin{align*}
I_1 &\le  C k^{\frac{N}{2} - \beta} e^{N(k+1)} \int_{A_k} |u|^{\frac{N}{2}} \, dx \\ 
&\le C k^{\frac{N}{2} - \beta} e^{N(k+1)} \| \lap u \|_{\frac{N}{2}}^{\frac{N}{2}} \int_{A_k} \( \log \frac{1}{|x|} \)^{\frac{N}{2}} \, dx \\
&\le C k^{\frac{N}{2} - \beta} e^{N(k+1)} \| \lap u \|_{\frac{N}{2}}^{\frac{N}{2}} \int_{k}^{k+2} t^{\frac{N}{2}} e^{-Nt} \, dt \\
&\le C k^{N+1 - \beta} \| \lap u \|_{\frac{N}{2}}^{\frac{N}{2}}.
\end{align*}
In the similar way, we obtain the following estimates of $I_2$ and $I_3$.
\begin{align*}
I_2 + I_3 \le C k^{\frac{N}{2} - \beta} \| \lap u \|_{\frac{N}{2}}^{\frac{N}{2}}.
\end{align*}
Here we used (\ref{estimate u'}) in Lemma \ref{2nd-order radial lemma} to show the estimate of $I_3$. 
From (\ref{R_N_k sum}) and the estimates of $I_i\, (i=1, 2, 3)$ we have
\begin{align*}
C \int_{B_1} \frac{|u |^{\frac{N}{2}}}{|x|^N \( \log \frac{a}{|x|} \)^\beta} \,dx 
\le C \( \sum_{k=2}^{\infty} k^{N+1-\beta} \) \int_{B_1} | \lap u |^{\frac{N}{2}} dx  
\le C \int_{B_1} | \lap u |^{\frac{N}{2}} dx.
\end{align*}
\end{proof}


Let $1 < p < \frac{N}{m}$ and $m \ge 2$. 
The higher order Rellich inequality
\begin{equation*}
C_{m,p}^p \int_{B_R} \frac{|u|^p}{|x|^{mp}} dx \le |u|^p_{m,p} 
\end{equation*}
holds for all $u \in W_0^{m,p}(B_R)$ (see \cite{Rellich}, \cite{Davies-Hinz}, \cite{Mitidieri}).
Here we set
\begin{align*}
&|u|^p_{m,p} = \begin{cases}
              \int_{B_R} | \lap^\ell u |^p \,dx \quad &\text{if} \,\, m=2\ell,\\
              \int_{B_R} | \nabla (\lap^\ell u) |^p \,dx  &\text{if} \,\, m=2\ell +1,
              \end{cases}\\
&C_{m,p} = \begin{cases}
              p^{-2\ell} \prod_{j =1}^{\ell} \{ N-2pj \} \{ N(p-1) +2p(j -1) \}  &\text{if} \, m=2\ell,\\
              \frac{(N-p)}{p^{2(\ell+1)}} \prod_{j =1}^{\ell} \( N-(2j+1)p \) \left\{ N(p-1) +(2j-1)p \right\} \, &\text{if} \,m=2\ell+1,
              \end{cases}
\end{align*}
for $m, \ell \in \N, \ell \ge 1$.

In the higer order case where $m \ge 3$, it is difficult to show the pointwise estimate corresponding to Lemma \ref{2nd-order radial lemma} by the same method in Lemma \ref{2nd-order radial lemma}.  
Due to the lack of good pointwise estimate for radial functions, our limiting procedure as $p \nearrow \frac{N}{m}$ can not work well in the higher order case. 
However, we can show at least the following pointwise estimates for radial functions in $W_0^{m, p}(B_R)$ for $m \ge 2$ via iteration method. 
The following pointwise estimates are not optimal. 
We expect that the pointwise estimates in Proposition \ref{higher order radial lemma} will be applicable somewhere.

\begin{prop}\label{higher order radial lemma}
Let $N, m \ge 3, p \in [1, \frac{N}{2})$ if $m$ is even and $p \in [1, N)$ if $m$ is odd, $u \in C_c^m (B_R)$ be a radial function, and $C$ be a constant which is independent of $u$. Then the following pointwise estimates hold for any $r \in (0, R)$.
\begin{align}\label{pointwise est}
|u(r)| \le C | u|_{m, p} r^{2-N}.
\end{align}
\end{prop}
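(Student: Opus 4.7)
The strategy is to iterate the second-order radial pointwise estimate at the $L^1$-level. The target weight $r^{2-N}$ in the proposition is exactly the one produced by the $p=1$ case of Lemma \ref{2nd-order radial lemma}, so I would (a) peel off the weight $r^{2-N}$ by applying that lemma with $p=1$ to $u$ itself, and then (b) bound $\|\Delta u\|_{L^1(B_R)}$ by $|u|_{m,p}$ through an $L^1$-to-$L^1$ recursion on the iterated Laplacians $\Delta^k u$, followed by a single H\"older step on the bounded domain $B_R$.

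Concretely, I would first apply \eqref{estimate u} with $p = 1$ to $u$ (using $u \in C_c^m(B_R) \subset C_c^2(B_R)$ since $m \ge 3$), obtaining
\begin{equation*}
|u(r)| \le \frac{1}{(N-2)\,\omega_{N-1}}\,\|\Delta u\|_{L^1(B_R)}\,r^{2-N}.
\end{equation*}
Next, I would rerun the same inequality on the sequence $v_k := \Delta^k u$, each of which is radial and compactly supported in $B_R$ by virtue of $u \in C_c^m(B_R)$. Multiplying the pointwise bound $|v_k(r)| \le C\,\|v_{k+1}\|_{L^1(B_R)}\,r^{2-N}$ by $r^{N-1}$ and integrating from $0$ to $R$ produces a recursion of the form $\|v_k\|_{L^1(B_R)} \le C(N)\,R^{2}\,\|v_{k+1}\|_{L^1(B_R)}$, valid whenever $v_k$ is still $C^2$, i.e.\ $2k \le m-2$. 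Chaining these bounds yields $\|\Delta u\|_{L^1(B_R)} \le C\,\|\Delta^{\lfloor m/2 \rfloor} u\|_{L^1(B_R)}$. To close the chain: for even $m = 2\ell$ a H\"older estimate on the ball converts $\|\Delta^\ell u\|_{L^1(B_R)}$ directly into $|B_R|^{(p-1)/p}\,|u|_{m,p}$; for odd $m = 2\ell+1$, since $\Delta^\ell u$ is only $C^1$ and we cannot use the second-order lemma one more time, I would apply instead the elementary radial identity $\Delta^\ell u(r) = -\int_r^R (\Delta^\ell u)'(s)\,ds$ with Fubini to get $\|\Delta^\ell u\|_{L^1(B_R)} \le (R/N)\,\|\nabla \Delta^\ell u\|_{L^1(B_R)}$, and then apply H\"older to reach $|u|_{m,p} = \|\nabla \Delta^\ell u\|_{L^p}$.

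The main obstacle is essentially bookkeeping: at each iteration I must verify that $v_k$ is still radial, $C^2$, and vanishes on $\partial B_R$ (all three automatic from $u \in C_c^m(B_R)$), and that the accumulated constants compose into a single finite $C = C(N,m,p,R)$ independent of $u$. The fact that the final weight $r^{2-N}$ does not improve with $m$ reflects precisely the crudeness the authors warn about just before the proposition: every iteration reuses the $p=1$ pointwise estimate and never refines the power of $r$ via a higher-integrability bound, so the weight is frozen at its second-order value.
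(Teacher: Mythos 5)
Your proof is correct, but it follows a genuinely different route from the paper's. The paper argues by induction on $m$ entirely at the level of \emph{pointwise} bounds: it applies the inductive estimate to $v=\Delta u$, feeds the resulting decay of $\Delta u$ through the change of variables $w(t)=Au(r)$, $r=R(t+1)^{-1/(N-2)}$ from the proof of Lemma \ref{2nd-order radial lemma} to get $|w''(t)|\le C\,|u|_{m,p}\,(t+1)^{b}$ with $b<-1$, and integrates twice to recover $|u(r)|\le C\,|u|_{m,p}\,r^{2-N}$; the full seminorm $|u|_{m,p}$ is carried through every stage, and the base cases ($m=3$, resp.\ $m=4$) invoke the first-order pointwise estimate for $\nabla\Delta u$, resp.\ Lemma \ref{2nd-order radial lemma} for $\Delta u$. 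You instead apply the second-order lemma exactly once, at the $L^1$ level, to peel off the weight $r^{2-N}$, and then run the recursion on the scalar quantities $\|\Delta^k u\|_{L^1(B_R)}$, closing with one H\"older step (plus the elementary fundamental-theorem-of-calculus/Fubini bound $\|\Delta^\ell u\|_{L^1}\le \frac{R}{N}\|\nabla\Delta^\ell u\|_{L^1}$ in the odd case). Your bookkeeping checks out: $\int_0^R r^{2-N}\,r^{N-1}\,dr=R^2/2$ makes each recursion step legitimate, $v_k=\Delta^k u$ is indeed radial, compactly supported and $C^2$ precisely when $2k\le m-2$, which is exactly the range needed to reach $k=\lfloor m/2\rfloor$. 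What your version buys is elementarity and generality: it avoids the transformation and the repeated double integration, and since the exponent $p$ enters only through H\"older on the bounded ball, it proves the estimate for every $p\in[1,\infty)$, not just the stated ranges. What it gives up is the pointwise $L^p$ information at intermediate stages, which is the form one would need in any attempt to improve the (admittedly non-optimal) weight $r^{2-N}$; as you note, your construction freezes the weight at its $p=1$ second-order value, though the paper's double integration in $t$ ends up at the same non-optimal power anyway.
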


\begin{proof}
We shall show (\ref{pointwise est})  for $p \in [1, N)$ and odd number $m$ inductively. First we show the case where $m=3$. 
By the transformation (\ref{trans}) for radial function $u$ and the pointwise estimate for radial function $v:= \lap u \in W_0^{1, p}$, we obtain 
\begin{align*}
|v(r)| \le C \| \nabla v \|_p r^{-\frac{N-p}{p}} = C \| \nabla \lap u\|_p (t+1)^{\frac{N-p}{(N-2)p}}.
\end{align*}
By (\ref{lap to ''}) we have
\begin{align*}
|w''(t)| \le C \| \nabla \lap u\|_p (t+1)^a,
\end{align*}
where $a = \frac{N - (2N -1)p}{(N-2)p} < -1$. Therefore we have
\begin{align*}
|w(t)| &\le \int_0^t \int_0^s |w'' (u)| \,du \,ds \\
&\le C \| \nabla \lap u\|_p \int_0^t \int_0^s (u+1)^a \,du \,ds \\
&\le C \| \nabla \lap u\|_p 
{\rm max} \{ (t+1)^{a+2}, \,t+1 \}  \le C \| \nabla \lap u\|_p (t+1).
\end{align*}
Thus we obtain (\ref{pointwise est}) for $m=3$. Next we assume that (\ref{pointwise est}) holds for $m= 2\ell +1$. 
And we shall show that (\ref{pointwise est}) also holds for $m=2(\ell + 1) +1$. 
For a radial function $u \in C_c^{2 \ell +3}$, set $v:= \lap u \in C_c^{2\ell +1}$. Applying (\ref{pointwise est}) for $v$, we have 
\begin{align*}
|v(r)| \le C \| \nabla \lap^\ell v\|_{L^p(B_R)} r^{2-N}.
\end{align*}
By (\ref{trans}) and (\ref{lap to ''}), we have
\begin{align*}
|w''(t)| \le C \| \nabla \lap^{\ell + 1} u\|_{L^p(B_R)} (t+1)^{b},
\end{align*}
where $b= -\frac{2N}{N-2} < -1$. 
Therefore we have
\begin{align*}
|w(t)| &\le \int_0^t \int_0^s |w'' (u)| \,du \,ds \\
&\le C \| \nabla \lap^{\ell +1} u\|_p \int_0^t \int_0^s (u+1)^b \,du \,ds \\
&\le C \| \nabla \lap^{\ell +1} u\|_p 
{\rm max} \{ (t+1)^{b+2}, \,t+1 \}  \le C \| \nabla \lap^{\ell +1} u\|_p (t+1).
\end{align*}
Therefore we observe that (\ref{pointwise est}) holds for $m=2(\ell +1) +1$. 

In the even case, the strategy of the proof is same as the odd case. 
In order to obtain (\ref{pointwise est}) for $m=4$, we use the pointwise estimate in Lemma \ref{2nd-order radial lemma} for radial function $v := \lap u \in C_c^{2}$. 
We omit the proof.
\end{proof}

%
%

\section{A limiting procedure for the Poincar\'e inequality}\label{Poincare}

In this section, we apply our limiting procedure to the Poincar\'e inequality:
\begin{align}\label{Po}
\la (\Omega) \int_\Omega |u|^p \, dx \le \int_{\Omega} |\nabla u|^p \,dx \quad (\,u \in C_c^1(\Omega), 1 \le p < \infty\,).
\end{align} 
The Poincar\'e inequality (\ref{Po}) does not have a critical exponent with respect to $p$ like the Hardy type inequalities.
However the optimal constant $\la (\Omega)$ goes to $\infty$ and $\int_\Omega |u|^p dx$ goes to $0$, as $| \Omega | \searrow 0$. This can be regarded as a kind of limiting situation. 
Recall that 
\begin{align}\label{order Po}
\la (\Omega) \ge \( \frac{N}{p} \,|B_1 | \,\)^p | \Omega |^{-\frac{p}{N}} 
\end{align}
see e.g. \cite{KF}. By using this growth order of $\la (\Omega)$ as $|\Omega | \searrow 0$ and our limiting procedure, we shall consider {\it a limit} for the Poincar\'e inequality as $|\Omega | \searrow 0$.

\begin{theorem}\label{H_p from P}
Let $1 \le p < \frac{N^2}{N-1}$. The following non-sharp classical Hardy inequality (\ref{H_p non-sharp}) can be derived by a limiting procedure for the Poincar\'e inequality (\ref{Po}) as $|\Omega | \searrow 0$. 
\begin{align}\label{H_p non-sharp}
C \int_{B_1} \frac{|u|^p}{|x|^\beta } \,dx \le \int_{B_1} | \nabla u |^p \,dx  \quad (u \in C_{c}^1 (B_1)).
\end{align}
Here the constant $C= C(\beta, p, N)>0$ is independent of $u$ and $\beta > 0$ satisfies
\begin{align*}
\begin{cases}
\beta < \frac{p}{N} \quad &\text{if} \,\, 1 \le p \le N,\\
\beta < \frac{p}{N} +N -p &\text{if} \,\, N< p < \frac{N^2}{N-1}.
\end{cases}
\end{align*}
\end{theorem}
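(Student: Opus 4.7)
The plan is to mimic the limiting procedure of \S\ref{limiting procedure} and \S\ref{Rellich}, with the Poincar\'e inequality (\ref{Po}) playing the role of the classical Hardy/Rellich inequalities and the volume limit $|\Omega|\searrow 0$ (quantified by the Faber--Krahn-type bound (\ref{order Po})) replacing the exponent limit $p\nearrow N$ (respectively $p\nearrow \frac{N}{2}$).

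First, since the weight $|x|^{-\beta}$ is radial, Lemma \ref{radial nomi} reduces the problem to radial $u\in C_c^1(B_1)$. As in \S\ref{limiting procedure}, choose $f(t)=e^{-t}$ and introduce the annuli $A_k:=B_{e^{-k}}\setminus B_{e^{-(k+2)}}$ together with a radial partition of unity $\{\phi_k\}_{k\in\Z}$ satisfying $\mathrm{supp}\,\phi_k\subset A_k$, $\sum_k\phi_k^p\equiv 1$, and $|\nabla\phi_k|\le Ce^k$. On each $A_k$ we have $|A_k|\le Ce^{-kN}$, so (\ref{order Po}) yields $\la(A_k)\ge Ce^{kp}$, while $|x|^{-\beta}\le Ce^{k\beta}$.

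Next, applying the Poincar\'e inequality on $A_k$ to $u_k:=u\phi_k\in C_c^1(A_k)$ and combining it with the upper bound on the weight gives
\begin{align*}
C \int_{A_k}\frac{|u_k|^p}{|x|^\beta}\,dx
&\le Ce^{k\beta}\int_{A_k}|u_k|^p\,dx
\le \frac{Ce^{k\beta}}{\la(A_k)}\int_{A_k}|\nabla u_k|^p\,dx \\
&\le Ce^{k(\beta-p)}\int_{A_k}|\nabla u_k|^p\,dx.
\end{align*}
Expanding $|\nabla u_k|^p\le 2^{p-1}(\phi_k^p|\nabla u|^p+|\nabla\phi_k|^p|u|^p)$, the dangerous cross term $\int_{A_k}|\nabla\phi_k|^p|u|^p\,dx$ is handled via Lemma \ref{radial lemma}: for $1\le p<N$ the bound $|u(r)|^p\le C\|\nabla u\|_p^p r^{-(N-p)}$, together with $|\nabla\phi_k|^p\le Ce^{kp}$ and $|A_k|\le Ce^{-kN}$, produces a uniform control by $C\|\nabla u\|_p^p$; for $p>N$ the Sobolev embedding $W_0^{1,p}\hookrightarrow L^\infty$ plays the analogous role but with weaker effective decay, which is what will force the different admissible range of $\beta$ in that regime.

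Finally, summing over $k\in\Z$ (the finitely many $k\le 0$ are absorbed by the standard Poincar\'e inequality on $B_1$ together with the boundedness of $|x|^{-\beta}$ away from the origin) and exploiting $\sum_k\phi_k^p\equiv 1$ reconstructs the left-hand side of (\ref{H_p non-sharp}). The main obstacle will be tracking the simultaneous convergence of several series in $k$: the geometric factor $e^{k(\beta-p)}$ must balance against the gradient growth $|\nabla\phi_k|^p\sim e^{kp}$, the Faber--Krahn scaling $|A_k|^{-p/N}$ in (\ref{order Po}), and the pointwise decay of $u$, in order to match precisely the stated ranges $\beta<p/N$ (for $1\le p\le N$) and $\beta<p/N+N-p$ (for $N<p<N^2/(N-1)$). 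The split between the two regimes reflects whether Lemma \ref{radial lemma} or the Sobolev embedding into $L^\infty$ is invoked to control $u$ pointwise on the thin annuli $A_k$.
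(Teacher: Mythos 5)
Your proposal follows the same skeleton as the paper's proof --- reduction to radial functions via Lemma \ref{radial nomi}, a partition of unity $\sum_k\phi_k^p=1$ supported on shrinking annuli, the Poincar\'e inequality combined with the volume bound (\ref{order Po}) on each annulus, Lemma \ref{radial lemma} to control the cross term $\int|u|^p|\nabla\phi_k|^p$, and a final summation --- but you make one substantive change: you take the \emph{geometric} annuli $A_k=B_{e^{-k}}\setminus B_{e^{-(k+2)}}$ imported from \S\ref{limiting procedure}, whereas the paper uses the \emph{harmonic} annuli $B_{1/k}\setminus B_{1/(k+2)}$, so that $k\le 1/|x|\le k+2$ on $A_k$, $|\nabla\phi_k|\lesssim k^2$, $|A_k|\lesssim k^{-N-1}$, and $b_k=k^{-p}(k+2)^{\beta-p/N}$. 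This changes the arithmetic in an essential way, and your prediction that the bookkeeping will ``match precisely the stated ranges'' is not what your own setup produces. With your annuli the per-annulus gain is $e^{k(\beta-p)}$ and, exactly as you compute, the cross term is $O(\|\nabla u\|_p^p)$ uniformly in $k$ for $1\le p<N$ (and $O(e^{k(p-N)}\|\nabla u\|_p^p)$ for $p>N$), so the series converge for every $\beta<p$ when $1\le p<N$ and for every $\beta<N$ when $p>N$ --- strictly larger ranges than $\beta<p/N$ and $\beta<p/N+N-p$. The stated ranges are an artifact of the paper's harmonic decomposition, where $|\nabla\phi_k|\sim k^2$ is large compared with $1/|x|\sim k$ and the cross term grows like $k^{p-1}$, which is what forces $\beta<p/N$. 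So your route is sound and in fact proves a stronger inequality that contains (\ref{H_p non-sharp}); just be aware that (i) you are not reproducing the theorem's exponent ranges but improving them, (ii) the case $p=N$ (logarithmic bound in Lemma \ref{radial lemma}) is silently skipped in your dichotomy, though it causes no difficulty, and (iii) the final convergence of the several series, which you defer, is the whole content of the argument and must be written out.
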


\begin{remark}
If $p \ge \frac{N^2}{N-1}$, then we can not obtain any information which is better than the Poincar\'e inequality (\ref{Po}) by out limiting procedure as $|\Omega | \searrow 0$, since $\beta =0$ in that case.
\end{remark}

\begin{proof}
From Lemma \ref{radial nomi}, it is enough to show the inequality (\ref{H_p non-sharp}) for any radial functions $u \in C_c^{1}(B_1)$. Let $1 \le p < N$ and $\{ \phi_k \}_{k \in \Z} \subset  C_c^{\infty}(\re^N \setminus \{ 0\})$ be radial functions which satisfy
\begin{align*}
&(i) \,\sum_{k=-\infty}^{+\infty} \phi_k (x)^p =1, \,0 \le \phi_k (x) \le 1 \,\, \( \forall x \in \re^N \setminus \{ 0\} \), \\
&(ii) \,\,\text{supp}\,\phi_k \subset B_{1/k} \setminus B_{1/k+2}.
\end{align*}
Set $u_k = u \,\phi_k$ and $A_k=$ supp\,$u_k \subset B_1 \cap \( B_{1/k} \setminus B_{1/k+2}\)$. 
Applying the Poincar\'e inequality (\ref{Po}) for $u_k$ and (\ref{order Po}), we have 
\begin{align*}
C k^p (k+2)^{\frac{p}{N}} \int_{A_k} |u_k|^p\, dx \le \int_{A_k} | \nabla u_k |^{p} \,dx. 
\end{align*}
Since $k \le \frac{1}{|x|} \le k+2$ for $x \in A_k$, 
\begin{align}\label{P_k}
C \int_{A_k} \frac{|u_k|^p}{|x|^\beta} \,dx \le b_k \int_{A_k} | \nabla u_k |^p dx
\end{align}
for $k \in \Z$, and $\beta > 2N$, where $b_k$ is given by
\begin{align*}
b_k=
\begin{cases}
k^{-p} (k+2)^{\beta - \frac{p}{N}} \quad &\text{if} \,\,\,k\ge 1,\\
1 &\text{if} \,\,\,k=0, -1,\\
0 &\text{if} \,\,\,k\le -2.
\end{cases}
\end{align*}
Summing both sides on (\ref{P_k}), we have
\begin{align*}
C \sum_{k \in \Z} \int_{B_1} \frac{|u \phi_k|^p}{|x|^\beta} \,dx \le \sum_{k \in \Z} b_k \int_{A_k} | \nabla (u \phi_k ) |^N dx.
\end{align*}
By applying Lemma \ref{radial lemma} and caluculating in the similar way to it in \S \ref{limiting procedure}, we see that for $\beta < \frac{p}{N}$ the desired inequality (\ref{H_p non-sharp}) can be obtained. 
In the case where $N \le p < \frac{N^2}{N-1}$, the proof is similar. 
Therefore we omit the proof in that case. 
\end{proof}


\section*{Acknowledgment}
This work was supported by the Research Institute for Mathematical
Sciences, an International Joint Usage/Research Center located in Kyoto
University and was (partly) supported by Osaka City University Advanced
Mathematical Institute (MEXT Joint Usage/Research Center on Mathematics
and Theoretical Physics). 
And also, the first author was supported by JSPS KAKENHI Early-Career Scientists, No. JP19K14568 and the second author was partially supported by JSPS KAKENHI Grant-in-Aid for Scientific Research(B), No. JP15H03621.

The authors thank Prof. Yuki Naito (Ehime University) for giving them a useful comment.



\begin{thebibliography}{99}

\bibitem{Adimurthi-Sandeep}
Adimurthi, Sandeep, K., {\it Existence and non-existence of the first eigenvalue of the perturbed Hardy-Sobolev operator}, 
\newblock Proc. Roy. Soc. Edinburgh Sect. A \textbf{132} (2002), No.5, 1021-1043.

\bibitem{AS}
Adimurthi, Santra, S., {\it Generalized Hardy-Rellich inequalities in critical dimension and its applications}, Commun. Contemp. Math., \textbf{11} (2009), No. 3, 367-394.

\bibitem{A}
Alvino, A., {\it A limit case of the Sobolev inequality in Lorentz spaces}, Rend. Accad. Sci. Fis. Mat. Napoli (4) 44 (1977), 105-112 (1978).

\bibitem{BG}
Baras, P., Goldstein, J. A., {\it The heat equation with a singular potential}, 
\newblock Trans. Amer. Math. Soc., \textbf{284} (1984), 121-139.

\bibitem{BP}
W. Beckner, and M. Pearson:
{\it On sharp Sobolev embedding and the logarithmic Sobolev inequalitiy},
\newblock Bull. London Math. Soc., \textbf{30}  (1998), 80-84.

\bibitem{BR}
Bennett, C., Rudnick, K., {\it On Lorentz-Zygmund spaces}, Dissertationes Math. (Rozprawy Mat.) 175 (1980), 67 pp.

\bibitem{BS}
Bennett, C., Sharpley, R., {\it Interpolation of Operators}, 
Pure and Applied Mathematics, vol. 129, Boston Academic Press, Inc., (1988).

\bibitem{BV}
Brezis, H., V\'{a}zquez, J. L., {\it Blow-up solutions of some nonlinear elliptic problems}, 
\newblock Rev. Mat. Univ. Complut. Madrid 10 (1997), No. 2, 443-469.


\bibitem{CR}
Cabr\'e, X., Ros-Oton, X., 
{\it Sobolev and isoperimetric inequalities with monomial weights}, 
J. Differential Equations., 255, (2013), 4312-4336

\bibitem{CC}
Carleson, L., Chang, S.-Y. A., 
{\it On the existence of an extremal function for an inequality of J. Moser}, 
Bull. Sci. Math. (2) 110 (1986), no. 2, 113-127. 

\bibitem{CRT(2009)}
Cassani, D., Ruf, B., Tarsi, C. {\it Best constants for Moser type inequalities in Zygmund spaces}, Mat. Contemp. 36 (2009), 79-90.

\bibitem{CRT(2010)}
Cassani, D, Ruf, B., Tarsi, C., {\it Best constants in a borderline case of second-order Moser type inequalities}, Ann. Inst. H. Poincare Anal. Non Lineaire 27 (2010), no. 1, 73-93. 

\bibitem{CRT(2013)}
Cassani, D., Ruf, B., Tarsi, C., {\it Group invariance and Pohozaev identity in Moser-type inequalities}, 
\newblock Commun. Contemp. Math. \textbf{15} (2013), No. 2, 1250054, 20 pp.

\bibitem{CRT(2018)}
Cassani, D., Ruf, B., Tarsi, C. {\it Equivalent and attained version of Hardy's inequality in $\re^n$}, J. Funct. Anal. 275 (2018), no. 12, 3303-3324.




\bibitem{Davies-Hinz}
Davies, E. B., Hinz, A. M., {\it Explicit constants for Rellich inequalities in $L^p(\Omega )$},
Math. Z. 227 (1998), no. 3, 511-523. 

\bibitem{DHA}
Detalla, A., Horiuchi, T., and Ando, H., {\it Missing terms in Hardy-Sobolev inequalities}, Proc. Japan Acad. Ser. A Math. Sci. \textbf{80} (2004), no. 8, 160-165.






\bibitem{GGS}
Gazzola, F., Grunau, H.-C., Sweers, G., {\it Polyharmonic boundary value problems. Positivity preserving and nonlinear higher order elliptic equations in bounded domains}, Lecture Notes in Mathematics, 1991. Springer-Verlag, Berlin, (2010), xviii+423 pp. 



\bibitem{HK}
Horiuchi, T., Kumlin, P., {\it On the Caffarelli-Kohn-Nirenberg-type inequalities involving critical and supercritical weights}, Kyoto J. Math. \textbf{52} (2012), no. 4, 661-742.

\bibitem{I}
Ioku, N., {\it Attainability of the best Sobolev constant in a ball}, Math. Ann., (2018). https://doi.org/10.1007/s00208-018-1776-7. 

\bibitem{II(2016)}
Ioku, N., Ishiwata, M., {\it A note on the scale invariant structure of critical Hardy inequalities}, Geometric properties for parabolic and elliptic PDE's, 97-120, Springer Proc. Math. Stat., \textbf{176}, Springer, (2016).

\bibitem{II}
Ioku, N., Ishiwata, M., {\it A Scale Invariant Form of a Critical Hardy Inequality}, Int. Math. Res. Not. IMRN (2015), no. 18, 8830-8846.


\bibitem{KF}
Kawohl, B., Fridman, V., {\it Isoperimetric estimates for the first eigenvalue of the $p$-Laplace operator and the Cheeger constant}, Comment. Math. Univ. Carolin. 44 (2003), no. 4, 659-667.

\bibitem{Ladyzhenskaya}
Ladyzhenskaya, O.A., {\it The mathematical theory of viscous incompressible flow, Second edition, revised and enlarged}, Mathematics and its Applications, Vol. 2 Gordon and Breach, Science Publishers, New York-London-Paris, (1969).

\bibitem{Leray}
Leray, J., {\it Etude de diverses equations integrales non lineaires et de quelques problemes que pose l'hydrodynamique. (French)}, (1933), 82 pp.


\bibitem{L}
Lieb, E. H., {\it Sharp constants in the Hardy-Littlewood-Sobolev and related inequalities}, Ann. of Math. (2) \textbf{118} (1983), no. 2, 349-374.


\bibitem{Mitidieri}
Mitidieri, E., {\it A simple approach to Hardy inequalities}, (Russian) Mat. Zametki 67 (2000), no. 4, 563-572; translation in Math. Notes \textbf{67} (2000), no. 3-4, 479-486.




\bibitem{Rellich}
Rellich, F., {\it Halbbeschr\"ankte Differentialoperatoren h\"oherer Ordnung},
\newblock (German) Proceedings of the International Congress of Mathematicians, 1954, Amsterdam, vol. III, pp. 243-250. Erven P. Noordhoff N.V., Groningen; North-Holland Publishing Co., Amsterdam, 1956.

\bibitem{SC}
Saloff-Coste, L., {\it Aspects of Sobolev-type inequalities}, London Mathematical Society Lecture Note Series, 289. Cambridge University Press, Cambridge, 2002. x+190 pp.

\bibitem{S}
Sano, M., {\it Extremal functions of generalized critical Hardy inequalities}, J. Differential Equations 267 (2019), no. 4, 2594-2615.

\bibitem{ST}
Sano, M., Takahashi, F., {\it Scale invariance structures of the critical and the subcritical Hardy inequalities and their improvements}, Cal. Var. PDEs, 56 (2017), no. 3, 56-69.


\bibitem{T}
Talenti, G., {\it Best constant in Sobolev inequality}, Ann. Mat. Pura Appl. (4) \textbf{110} (1976), 353-372. 


\bibitem{Tru}
Trudinger, N. S., {\it On imbeddings into Orlicz spaces and some applications}, J. Math. Mech. 17 (1967), 473-483.

\bibitem{VZ}
V\'{a}zquez, J. L., Zuazua, E., {\it The Hardy inequality and asymptotic behaviour of the heat equation with an inverse-square potential}, J. Funct. Anal., \textbf{173} (2000), 103-153.

\bibitem{Y}
Yano, S., {\it Notes on Fourier analysis. XXIX. An extrapolation theorem}, J. Math. Soc. Japan 3, (1951). 296-305.

\bibitem{Z}
Zygmund, A., {\it Trigonometric Series. 2nd ed. Vols. I, II}, Cambridge University Press, New York 1959 Vol. I. xii+383 pp.; Vol. II. vii+354 pp.

\bibitem{Za}
Zographopoulos, N. B., {\it Existence of extremal functions for a Hardy-Sobolev inequality}, J. Funct. Anal. \textbf{259} (2010), no. 1, 308-314. 
\end{thebibliography}
\end{document}